\newtheorem{thm}{Theorem}[section]
\newtheorem{lem}[thm]{Lemma}
\newtheorem{rmk}{Remark}
\numberwithin{equation}{section}
\newcommand{\la}{\lambda}
\newcommand{\va}{\varphi}
\newcommand{\wa}{\widetilde{a}}
\newcommand{\wb}{\widetilde{b}}
\newcommand{\wc}{\widetilde{c}}
\newcommand{\wmm}{\widetilde{m}}
\newcommand{\wchi}{\widetilde{\chi}}
\newcommand{\wf}{\widetilde{f}}
\newcommand{\wv}{\widetilde{v}}
\DeclareMathOperator{\supp}{supp}
\newcommand{\R}{\mathbb{R}} 
\newcommand{\N}{\mathbb{N}} 
\newcommand{\pp}{\partial}
\newcommand{\relmiddle}[1]{\mathrel{}\middle#1\mathrel{}}
\title[]{%
H\"{o}lder stability estimate in an inverse source problem 
for a first and half order time fractional diffusion equation
}
\author{Atsushi Kawamoto}
\address{ Department of Mathematical Sciences, The University
of Tokyo, Komaba Meguro Tokyo 153 Japan,
E-mail:kawamo@ms.u-tokyo.ac.jp
}
\date{}
\begin{document}

\begin{abstract}
We consider the first and half order time fractional equation with the zero initial condition. 
We investigate an inverse source problem of determining the time-independent source factor 
by the spatial data at an arbitrarily fixed time 
and we establish the conditional stability estimate of H\"{o}lder type in our inverse problem. 
Our method is based on the Bukhgeim-Klibanov method by means of the Carleman estimate. 
We also derive the Carleman estimate for the first and half order time fractional diffusion equation.  

\end{abstract}
\markright{\today}
\maketitle
%
%
\noindent

\section{Introduction}


Let 
$T>0$, 
$\Omega\subset \R^n$ be a bounded domain with sufficiently smooth boundary $\pp\Omega$. 
We set $Q=\Omega \times (0,T)$. 

We consider 
the following equation and the initial condition:
\begin{align}
\label{eq:L2_eq}
&
(\rho_1 \pp_t+\rho_2\pp_t^{\frac12}  - L)u(x,t)=g(x,t),\quad
(x,t)\in Q,\\
\label{eq:ini_condi}
&
u(x,0)=0,\quad x\in \Omega,
\end{align}
where 
$\rho_1>0$, $\rho_2 \neq 0$ are constants, 
$\pp_t^{\frac12}$ is a Caputo type fractional derivative of half order:
\begin{equation*}
\pp_t^{\frac12} u(x,t) := \frac{1}{\Gamma\left(\frac12\right)} \int_0^t \frac{\pp_t u(x,\tau)}{(t-\tau )^{\frac12}}\,d\tau,
\quad (x,t)\in Q,
\end{equation*}
and $L$ is a symmetric uniformly elliptic operator:
\begin{equation}
\label{eq:def_L2}
L u(x,t) :=\sum_{i,j=1}^n \pp_i (a_{ij}(x) \pp_j u(x,t))
+\sum_{j=1}^n b_j (x)\pp_j u(x,t)
+ c(x)u(x,t),\ (x,t) \in Q.  
\end{equation}
Throughout this article, 
we assume that $a_{ij}\in C^3(\overline{\Omega})$, $a_{ij}=a_{ji}$ ($1\leq i,j \leq n$), 
and that there exists a constant $m>0$ such that 
\begin{equation}
\label{eq:coe_a}
\frac1{m} |\xi|^2 
\leq
\sum_{i,j=1}^n a_{ij}(x) \xi_i \xi_j
\leq
m |\xi|^2,
\quad
\xi=(\xi_1,\ldots, \xi_n) \in \R^n,\ 
x \in \overline{\Omega},
\end{equation}
and 
$
b_j \in  C^2(\overline{\Omega})$ ($1\leq j \leq n$), $c \in  C^2(\overline{\Omega}).
$
Here and Henceforth we use notations
$\pp_t= \frac{\pp}{\pp t}$, 
$\pp_i=\frac{\pp}{\pp x_i}$ ($i=1,2,\ldots, n$). 
We also use the multi index $\alpha=(\alpha_1,\alpha_2, \ldots, \alpha_n)$ 
with $\alpha_j \in \mathbb{N}\cup\{0\}$ ($j=1,2,\ldots,n$), 
$\pp_x^\alpha =\pp_1^{\alpha_1}\pp_2^{\alpha_2} \cdots \pp_n^{\alpha_n}$, 
$|\alpha|=\alpha_1+\alpha_2+\cdots+\alpha_n$. 
Let $\nu =\nu (x)$ be the outwards unit normal vector to $\pp\Omega$ at $x$ and let 
$\pp_\nu =\nu \cdot \nabla$.


Recent years, anomalous diffusion phenomena are actively studied. 
If we consider the diffusion phenomena in some heterogeneous media, 
it is known that the diffusion  is different from the classical diffusion 
and it is called the anomalous diffusion.  
Their researches are important in various applications such as environmental problems. 
Indeed, anomalous diffusion phenomena appear naturally if we study the pollution in the underground. 
As a mathematical approach for the anomalous diffusion phenomena, we may consider a homogenization. 
In this article, we treat the equation \eqref{eq:L2_eq} which is derived from the homogenization by 
Amaziane, Pankratov and Piatnitski \cite{APP}. 
As a micro model, they considered the linear parabolic equation in thin periodic fractured media, 
and then, they established a homogenized  macro model for \eqref{eq:L2_eq}. 
Meanwhile, one of the popular model equations for the anomalous diffusion phenomena
is a fractional diffusion equation (FDE) derived from the continuous time random walk model. 
It may explain field data of the anomalous diffusion which is slower than the classical one (see e.g., \cite{AG, HH, MK}). 
FDEs and their applications for inverse problems are investigated intensively (see e.g., \cite{JR} and references therein). 
As the other interpretation for our equation, we may regard \eqref{eq:L2_eq} as the special case of 
the multi term time FDE which is known as a generalization of FDEs. 
Here we call \eqref{eq:L2_eq} a first and half order time fractional diffusion equation. 
On the well-posedness results for multi term time FDEs, 
we may refer to \cite{BD, LLY, L}. 


In this article we consider an inverse problem of determining the time-independent factor  
of the source term $g$ of \eqref{eq:L2_eq}  
by the additional data $u(x,t_0)$, $x\in \Omega$ where $t_0\in (0,T)$ is an arbitrarily fixed time. 
We establish the conditional H\"{o}lder type stability estimate in our inverse source problem. 


Our methodology is based on the Bukhgeim-Klibanov method. 
Bukhgeim and Klibanov proved the global uniqueness in inverse problems 
by using the Carleman estimate in \cite{BK}. 
Imanuvilov and Yamamoto \cite{ImY} established global Lipschitz type stability estimates 
in inverse source problems for parabolic equations. 
As for the Bukhgeim-Klibanov method, see some monographs and papers 
\cite{K1, K2, KT, Y} and references therein. 


One of the key tools in the Bukhgeim-Klibanov method is the Carleman estimate. 
A Carleman estimate is a weighted $L^2$ inequality for a solution of a partial differential equation 
and is introduced by Carleman \cite{C} to prove the uniqueness in the Cauchy problem for a first order elliptic system in $\R^2$ with non-analytic coefficients.  
Since Carleman's paper, there have been a lot of work for Carleman estimates and their applications
(see e.g., monographs \cite{H, Is1, KT} and references therein). 
In this article we establish the local Carleman estimate for \eqref{eq:L2_eq} with the zero initial condition 
by using the local Carleman estimate for parabolic equations. 
The main idea of the proof is a transformation from \eqref{eq:L2_eq} to an integer order partial differential equation. 
Related to the Carleman esitmate for FDEs, we may refer to \cite{CLNa, Kawa, LNa2, XCY} 
in which the single term time FDE is considered.


As the most relevant work to this paper, we may refer to \cite{Kawa, KM, YZ}. 
Yamamoto and Zhang \cite{YZ} established the conditional H\"{o}lder type stability estimate in determining the time-independent source factor 
of a single term time FDE from the additional data at an arbitrarily fixed time 
by using the local Carleman estimate derived in \cite{XCY}.  
Our result for the stability estimate is motivated by their result. 
In \cite{Kawa}, he obtained the Lipschitz type stability estimates by 
the additional data at an arbitrarily fixed time and 
boundary/interior data 
in inverse source problems for a single term time FDE
with suitable boundary conditions and the zero initial condition 
by using the Carleman estimates with boundary/interior data. 
In \cite{KM}, they investigated the half order time fractional radiative transport equation which is a model equation of anomalous transport phenomena derived in \cite{M}, 
and then, they proved Lipschitz type stability estimate in coefficient inverse problems by using the Carleman estimate with boundary data. 
In the above articles \cite{Kawa, KM, YZ}, they dealt with one dimensional case in space.


To the author's knowledge, stability estimates for multi term time FDEs 
have not established yet, although \eqref{eq:L2_eq} is a special case of multi term time FDEs. 
Moreover our equation \eqref{eq:L2_eq} is multi-dimensional case in space.


On inverse problems for multi term time FDEs, 
we may refer to \cite{JLLY, LImY, LY, Liu}. 
Related to our results for the inverse source problems, 
Liu \cite{Liu} obtained the uniqueness result in inverse problem of determining the temporal component of the source term from the single point observation by using the strong maximum principle. 
In \cite{JLLY}, they established the uniqueness in determining the spatial component of the source term from interior observation.


This article is organized as follows. 
\S 1 is devoted to this introduction. 
In \S 2 we describe the main result for the stability estimate in an inverse source problem.  
We state and show the Carleman estimate in \S3. 
And we prove our main Theorem of the stability estimate by using the Carleman estimate in \S4. 

\section{stability estimate}

Let $t_0\in (0,T)$ be an arbitrarily fixed time. 
Let $\gamma$ be an arbitrarily fixed open connected sub-boundary of $\pp\Omega$
and let $\omega\Subset\Omega \cup \gamma$ be a sub-domain with the smooth boundary $\pp\omega$. 
We set $\Gamma=\gamma \times (0,T)$. 
We assume that 
\begin{equation}
\label{eq:R}
\left\{
\begin{aligned}
&R\in C^2([0,T];C^2(\overline{\Omega}))\cap C^3([0,T];C(\overline{\Omega})),\quad
\pp_t^{\frac12} R \in C^2([0,T];C(\overline{\Omega})),\\
&\text{and}\ |R(x,t_0)|>0,\  x\in\overline{\Omega}.
\end{aligned}
\right.
\end{equation}

We consider 
the following equation and the initial condition:
\begin{align}
\label{eq:eq01}
&
(\rho_1\pp_t +\rho_2\pp_t^{\frac12} - L)u(x,t)=f(x)R(x,t),&
(x,t)\in Q,\\
\label{eq:eq02}
&
u(x,0)=0,& x\in \Omega,\\
&
u(x,t)=\pp_\nu u(x,t)=0,&
(x,t)\in \Gamma, 
\label{eq:eq03}
\end{align}
and we investigate the following inverse problem. 

\noindent
{\bf Inverse source problem}: 
Determine $f(x)$, $x\in \omega$ by data $u(x,t_0)$, $x\in \Omega$. 
\begin{thm}
\label{thm:isp}
Let $u, \pp_t u, \pp_t^2 u \in L^2(0,T;H^4(\Omega))\cap H^1(0,T;H^2(\Omega))\cap  H^2(0,T;L^2(\Omega))$ and u satisfies \eqref{eq:eq01}--\eqref{eq:eq03} and the additional boundary condition $\pp_x^\alpha u(x,t)=0$, $(x,t)\in \Gamma$, $|\alpha|=2$. 
We suppose that $f\in H^2(\Omega)$ with $f=\pp_\nu f=0$ on $\gamma$ 
and $R$ satisfies \eqref{eq:R},  
Moreover we assume that there exists $M>0$ such that
\begin{equation}
\label{eq:M}
\left\{
\begin{aligned}
&
\|\pp_t^k u\|_{L^2(0,T;H^4(\Omega))}
+
\|\pp_t^ku\|_{H^1(0,T;H^2(\Omega))}\\
&\qquad
+
\|\pp_t^ku\|_{H^2(0,T;L^2(\Omega))} 
\leq M,\quad k=0,1,2,\\
& 
\| f \|_{H^2(\Omega)}
\leq M.
\end{aligned}
\right.
\end{equation} 
Then there exist constants $C>0$ and $\kappa \in (0,1)$ such that
\begin{equation}
\label{eq:se}
\| f \|_{H^2(\omega)}
\leq
C
\|u(\cdot, t_0)\|_{H^4(\Omega)}^\kappa.  
\end{equation}
\end{thm}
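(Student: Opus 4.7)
The strategy is the Bukhgeim--Klibanov method combined with the Carleman estimate established in Section~3. The starting point is to set $v := \partial_t u$ and evaluate \eqref{eq:eq01} at $t = t_0$. Since $|R(x, t_0)| \geq \delta > 0$ on $\overline{\Omega}$, one can invert:
\[
f(x) = \frac{1}{R(x, t_0)}\bigl[\rho_1 v(x, t_0) + \rho_2 \partial_t^{1/2} u(x, t_0) - L u(x, t_0)\bigr].
\]
Differentiating twice in $x$ and using $R \in C^2(\overline Q)$ together with $f = \partial_\nu f = 0$ on $\gamma$, the contribution of $\|L u(\cdot, t_0)\|_{H^2(\omega)}$ is controlled by $\|u(\cdot, t_0)\|_{H^4(\Omega)}$, so the task reduces to estimating $\|v(\cdot, t_0)\|_{H^2(\omega)}$ and $\|\partial_t^{1/2} u(\cdot, t_0)\|_{H^2(\omega)}$ by data.

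I would next derive a PDE for $v$ by differentiating \eqref{eq:eq01} in $t$. Because $u(x, 0) = 0$, the equation at $t = 0$ gives $v(x, 0) = f(x) R(x, 0)/\rho_1$, and the standard conversion between the Riemann--Liouville and Caputo half-derivatives produces
\[
(\rho_1 \partial_t + \rho_2 \partial_t^{1/2} - L) v(x,t) = f(x) \partial_t R(x, t) - \rho_2\, \frac{f(x) R(x, 0)}{\rho_1 \Gamma(1/2)\sqrt{t}}.
\]
The analogous equation holds for $\partial_x^\alpha v$ with $|\alpha| \leq 2$, and the lateral traces on $\Gamma$ vanish thanks to the assumption $\partial_x^\alpha u = 0$ on $\Gamma$ for $|\alpha| \leq 2$.

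The Carleman estimate of Section~3 is then applied to each $\partial_x^\alpha v$ with a weight $\varphi(x, t)$ whose time profile attains its strict maximum at $t = t_0$ on $\overline{\omega}$. The key device is the identity
\[
e^{2s\varphi(x, t_0)} |\partial_x^\alpha v(x, t_0)|^2 = e^{2s\varphi(x, 0)} |\partial_x^\alpha v(x, 0)|^2 + 2 \int_0^{t_0} \partial_t\bigl(e^{2s\varphi(x,t)}\, |\partial_x^\alpha v(x,t)|^2\bigr)\, dt,
\]
which converts the point value at $t = t_0$ into a spacetime Carleman integral plus a $t = 0$ trace whose weight is exponentially smaller for $s$ large. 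The nonlocal quantity $\partial_t^{1/2} u(\cdot, t_0) = \Gamma(1/2)^{-1} \int_0^{t_0} (t_0-\tau)^{-1/2} v(\cdot,\tau)\, d\tau$ is controlled by a similar spacetime integral of $v$ on $(0, t_0)$ via Young's inequality for convolutions. After absorbing the $\|f\|_{H^2(\omega)}^2$ contributions arising from $f\partial_t R$ and from the initial-correction source into the left-hand side for $s \geq s_0$ sufficiently large, and estimating the remaining terms by the a priori bound $M$ and the data, one arrives at an inequality of the form
\[
\|f\|_{H^2(\omega)}^2 \leq C e^{Cs} \|u(\cdot, t_0)\|_{H^4(\Omega)}^2 + C e^{-\sigma s} M^2, \quad s \geq s_0,
\]
and optimizing in $s$ yields \eqref{eq:se} with $\kappa = \sigma/(C + \sigma) \in (0, 1)$.

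The principal technical obstacle is the nonlocality of the Caputo half-derivative. Differentiating \eqref{eq:eq01} in $t$ generates a source proportional to $f(x) R(x, 0)/\sqrt{t}$ that is both mildly singular at $t = 0$ and depends on the unknown $f$; simultaneously, $\partial_t^{1/2} u(\cdot, t_0)$ couples the sought point value at $t_0$ to $v$ over the whole interval $(0, t_0)$. Both difficulties must be reconciled using that the Carleman weight peaks strictly at $t_0$, so that initial-time and convolution contributions remain exponentially smaller than the target quantity and can be absorbed (for the $f$-dependent parts) or dominated by the a priori bound $M$ (for the residual parts).
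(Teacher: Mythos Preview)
Your plan diverges from the paper's at the very first move, and the divergence creates two gaps that I do not see how to close.

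\textbf{(i) The Carleman estimate of Section~3 does not apply to the equation you write for $v$.} Theorem~\ref{thm:ce0} bounds the left-hand side by the weighted $L^2$ norm of $[\rho_2^2\partial_t-(\rho_1\partial_t-L)^2]u$, an \emph{integer-order} operator. It is obtained only after Lemma~\ref{lem:halftoone} has eliminated $\partial_t^{1/2}$. The equation you derive for $v=\partial_t u$ still carries $\rho_2\partial_t^{1/2}$, so Theorem~\ref{thm:ce0} says nothing about it directly. Nor can you invoke Lemma~\ref{lem:halftoone} on $v$ to pass to the squared operator: that lemma requires the zero initial condition, and you yourself computed $v(x,0)=f(x)R(x,0)/\rho_1\neq 0$. (Even if you bypass this and try to apply Theorem~\ref{thm:ce0} to $\partial_x^\alpha v$ via the squared equation for $u$, the source $\partial_x^\alpha\partial_t F$ with $|\alpha|=2$ contains fourth-order spatial derivatives of $f$, exceeding the assumed $H^2$ regularity.)

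\textbf{(ii) The nonlocal term goes the wrong way.} Your expression for $f$ contains $\partial_t^{1/2}u(\cdot,t_0)$, and you claim that because the weight peaks at $t_0$ the ``convolution contributions remain exponentially smaller''. The opposite is true. The quantity $\partial_t^{1/2}u(x,t_0)$ sits at the \emph{maximum} of $\varphi(x,\cdot)$ while it integrates $v(x,\tau)$ over $\tau<t_0$ where the weight is smaller. Attempting to dominate $e^{2s\varphi(x,t_0)}\bigl|\int_0^{t_0}(t_0-\tau)^{-1/2}v(x,\tau)\,d\tau\bigr|^2$ by the Carleman integral $\int_0^{t_0}|v(x,\tau)|^2 e^{2s\varphi(x,\tau)}\,d\tau$ produces a factor $e^{2s[\varphi(x,t_0)-\varphi(x,\tau)]}\geq 1$ that \emph{grows} with $s$; there is no absorption. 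If instead you estimate the unweighted $H^2$ norm, you get only $\|\partial_t^{1/2}u(\cdot,t_0)\|_{H^2}\leq CM$, which contributes an $O(M^2)$ term with no decaying $s$-prefactor and destroys the H\"older conclusion.

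The paper sidesteps both issues by applying Lemma~\ref{lem:halftoone} to $u$ \emph{before} anything else (legitimate since $u(\cdot,0)=0$), and thereafter working exclusively with the integer-order equation $\rho_2^2\partial_t u-(\rho_1\partial_t-L)^2u=F$. Evaluated at $t=t_0$, this is a second-order \emph{elliptic} equation in $x$ for $f$ (because $F$ is $L$ applied to $fR$ plus lower-order terms), with right-hand side built from the purely local traces $\partial_t u(\cdot,t_0)$, $\partial_t^2 u(\cdot,t_0)$, $\partial_t Lu(\cdot,t_0)$ and $L^2u(\cdot,t_0)$. The parabolic Carleman estimate is applied to $\chi\partial_t u$ and $\chi\partial_t^2 u$ (time derivatives only, no extra $\partial_x^\alpha$) to control the first three traces; an elliptic Carleman estimate (Lemma~\ref{lem:celeme}) then recovers $\|f\|_{H^2}$ from $\|F(\cdot,t_0)\|_{L^2}$. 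No fractional derivative reappears after the first step, and no nonlocal trace has to be estimated.
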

\begin{rmk}
For simplicity of the statement of Theorem \ref{thm:isp}, we observed the additional data $u(\cdot,t_0)$ on the whole domain $\Omega$. 
More precisely,  it is sufficient to observe the data $u(\cdot,t_0)$ on the sub-domain $\Omega_2$ of $\Omega$ such that $\omega \subset \Omega_2$. 
$\Omega_2$ is defined in the proof of this Theorem and 
the domain depends on the weight function of the Carleman estimate. 
\end{rmk}
\begin{rmk}
It is required to assume that the additional boundary condition $\pp_x^\alpha u(x,t)=0$, $(x,t)\in \Gamma$, $|\alpha|=2$ 
when we estimate the boundary term of the Carleman estimate in the proof of our Theorem \ref{thm:isp}. 
In the case of $n=1$, that is, $\Omega\subset \R$ is one dimensional case in space, 
we may relax this additional boundary condition. 
Indeed, by $f=0$ on $\gamma$ and $u=\pp_1 u=0$ on $\Gamma$, 
\eqref{eq:eq01} implies that $\pp_1^2 u=0$ on $\Gamma$. 
In the multi dimensional case in space, taking a small sub-domain $\widetilde{\omega} \subset \Omega$ such that $\Gamma\subset\pp\widetilde{\omega}$, 
we may also assume that $u(x,t)=0$, $(x,t)\in \widetilde{\omega}$ instead of boundary conditions. 
\end{rmk}
\begin{rmk}
Comparing our result (Theorem \ref{thm:isp}) with the result by Yamamoto and Zhang \cite{YZ}, 
they proved the conditional stability estimate of H\"{o}lder type in the case of $\rho_1=0$, $\rho_2=1$ in \eqref{eq:L2_eq} and $n=1$. 
On the other hand, 
we assume that $\rho_1>0$, $\rho_2\neq 0$ in \eqref{eq:L2_eq} and $n \in \N$ in our result.  
\end{rmk}

\section{Carleman estiamte}

We reduce \eqref{eq:L2_eq} to the following equation:
\begin{equation*}
\rho_2^2 \pp_t u(x,t)- (\rho_1\pp_t - L)^2 u(x,t)=G(x,t),\quad (x,t)\in Q.
\end{equation*}
And we derive the Carleman estimate for the above equation. 

Let us justify the above reduction from \eqref{eq:L2_eq}. 
Using the idea of Xu, Cheng and Yamamoto \cite{XCY}, we may prove the following Lemma. 
\begin{lem}
\label{lem:halftoone}
If $u \in C([0,T];H^4(\Omega))\cap  C^1([0,T];H^2(\Omega))\cap  C^2([0,T];L^2(\Omega))$ satisfies \eqref{eq:L2_eq} and \eqref{eq:ini_condi}, 
then $u$ satisfies
\begin{equation}
\label{eq:lem01}
\rho_2^2 \pp_t u(x,t)- (\rho_1\pp_t - L)^2 u(x,t)=G(x,t),\quad (x,t)\in Q
\end{equation}
where 
\begin{equation}
\label{eq:lem02}
G(x,t)=\left[ \rho_2\pp_t^\frac12- (\rho_1\pp_t - L) \right]g(x,t) +\frac{\rho_2 g(x,0)}{\sqrt{\pi t}},\quad (x,t)\in Q. 
\end{equation}
\end{lem}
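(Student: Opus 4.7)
The plan is to eliminate the half-order Caputo derivative by applying $\rho_2 \partial_t^{1/2}$ to the equation \eqref{eq:L2_eq} and then re-injecting the original equation to express $\rho_2 \partial_t^{1/2} u$ back in terms of $u$ and $g$. Writing \eqref{eq:L2_eq} as
\begin{equation*}
\rho_2 \partial_t^{1/2} u = g - (\rho_1 \partial_t - L) u,
\end{equation*}
and applying $\rho_2 \partial_t^{1/2}$ to both sides, I expect the left-hand side to produce $\rho_2^2 (\partial_t^{1/2})^2 u$ plus a commutator term coming from $\partial_t^{1/2}(\rho_1 \partial_t u)$, while the right-hand side yields $\rho_2 \partial_t^{1/2} g - \rho_2 \partial_t^{1/2}(\rho_1 \partial_t - L)u$. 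Since $L$ acts only in $x$, it commutes with $\partial_t^{1/2}$ freely, so the key algebraic ingredients are two fractional-calculus identities.

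The first ingredient is the semigroup identity $(\partial_t^{1/2})^2 u = \partial_t u$, which I would prove by writing $\partial_t^{1/2}$ as $I^{1/2}\partial_t$ in the Caputo sense, using the Riemann--Liouville composition $I^{1/2}I^{1/2}=I^1$ (a direct Fubini/Beta-function computation), and exploiting $u(x,0)=0$ so that $I^1(\partial_t^2 u)=\partial_t u$. The second is the non-commutation formula
\begin{equation*}
\partial_t^{1/2}(\partial_t u)(x,t) = \partial_t(\partial_t^{1/2} u)(x,t) - \frac{\partial_t u(x,0)}{\sqrt{\pi t}},
\end{equation*}
which follows from the Leibniz rule $\partial_t I^{1/2} h = I^{1/2}(\partial_t h) + \frac{h(0)}{\sqrt{\pi t}}$ applied to $h=\partial_t u$; here the auxiliary computation $I^{1/2}\bigl(1/\sqrt{\pi t}\bigr)=1$ is what makes the pieces fit.

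The third ingredient is an evaluation of the original equation at $t=0$: since $u(x,0)=0$ forces $Lu(x,0)=0$ and $\partial_t^{1/2}u(x,0)=0$, \eqref{eq:L2_eq} collapses to $\rho_1 \partial_t u(x,0) = g(x,0)$. Plugging this into the commutator term yields exactly the inhomogeneity $\frac{\rho_2 g(x,0)}{\sqrt{\pi t}}$ in \eqref{eq:lem02}. Combining these, I obtain
\begin{equation*}
\rho_2^2 \partial_t u - (\rho_1\partial_t - L)(\rho_2 \partial_t^{1/2} u) = \rho_2 \partial_t^{1/2} g + \frac{\rho_2 g(x,0)}{\sqrt{\pi t}},
\end{equation*}
after which a final substitution of $\rho_2 \partial_t^{1/2} u = g - (\rho_1 \partial_t - L)u$ into the second term on the left produces \eqref{eq:lem01}--\eqref{eq:lem02}.

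The main obstacle is making these formal manipulations rigorous. The delicate step is differentiating the singular integral $I^{1/2}(\partial_t u)$ in $t$ to produce the boundary term $\partial_t u(x,0)/\sqrt{\pi t}$; this requires the stated regularity $u \in C^2([0,T];L^2(\Omega))$ and $u \in C^1([0,T];H^2(\Omega))$ so that the Leibniz-type rule can be applied after a change of variable $\tau \mapsto t-\tau$, and then passed through the spatial operator $L$ using $u \in C([0,T];H^4(\Omega))$. Once these identities are justified pointwise in $(x,t) \in Q$, the proof reduces to the bookkeeping sketched above.
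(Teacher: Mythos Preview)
Your proposal is correct and follows essentially the same route as the paper: both arguments hinge on the identities $(\partial_t^{1/2})^2 u=\partial_t u-\frac{\partial_t^{1/2}u(x,0)}{\sqrt{\pi t}}$ and $\partial_t^{1/2}\partial_t u-\partial_t\partial_t^{1/2}u=-\frac{\partial_t u(x,0)}{\sqrt{\pi t}}$, together with the evaluation of \eqref{eq:L2_eq} at $t=0$; the only cosmetic difference is that the paper applies the full operator $\rho_2\partial_t^{1/2}-(\rho_1\partial_t-L)$ to \eqref{eq:L2_eq} at once, whereas you apply $\rho_2\partial_t^{1/2}$ and then resubstitute. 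Note, however, that your displayed intermediate equation has a sign slip: the correct line is $\rho_2^2\partial_t u+(\rho_1\partial_t-L)(\rho_2\partial_t^{1/2}u)=\rho_2\partial_t^{1/2}g+\frac{\rho_2 g(x,0)}{\sqrt{\pi t}}$, and it is this plus sign that, after substituting $\rho_2\partial_t^{1/2}u=g-(\rho_1\partial_t-L)u$, produces the desired $-(\rho_1\partial_t-L)^2u$ on the left and $-(\rho_1\partial_t-L)g$ on the right.
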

\begin{proof}[Proof of Lemma]
Applying $\left[ \rho_2\pp_t^\frac12- (\rho_1\pp_t - L) \right]$ to \eqref{eq:L2_eq}, we have
\begin{equation*}
\left[ \rho_2\pp_t^\frac12- (\rho_1\pp_t - L) \right](\rho_2\pp_t^{\frac12}+\rho_1\pp_t  - L)u
=\left[ \rho_2\pp_t^\frac12- (\rho_1\pp_t - L) \right] g.
\end{equation*}
Expanding the left-hand side of the above equation, we get
\begin{equation*}
\rho_2^2\pp_t^\frac12\pp_t^\frac12 u 
+ \rho_1\rho_2\pp_t^\frac12 \pp_t u 
- \rho_1\rho_2\pp_t \pp_t^\frac12 u
-\rho_1^2\pp_t^2 u +2 \rho_1\pp_t L u - L^2 u
=\left[ \rho_2\pp_t^\frac12- (\rho_1\pp_t - L) \right] g. 
\end{equation*}
Since $(\rho_1\pp_t -L)^2=\rho_1^2\pp_t^2 -2\rho_1\pp_t L +L^2$, we have
\begin{equation}
\label{eq:l00}
\rho_2^2\pp_t^\frac12\pp_t^\frac12 u 
+ \rho_1\rho_2\pp_t^\frac12 \pp_t u 
- \rho_1\rho_2\pp_t \pp_t^\frac12 u
-(\rho_1\pp_t -L)^2 u
=\left[ \rho_2\pp_t^\frac12- (\rho_1\pp_t - L) \right] g. 
\end{equation}

Henceforth we calculate $\pp_t^\frac12\pp_t^\frac12 u$,  $\pp_t^\frac12 \pp_t u - \pp_t \pp_t^\frac12 u$. 

Let $D_t^\alpha$ denote the Riemann-Liouville fractional derivative. 
By the relations between the Riemann-Liouville and the Caputo fractional derivative (see \cite{Pod}), we have
\begin{equation}
\label{eq:l01}
\pp_t^{\frac12} u (x,t) =D_t^\frac12 u(x,t) - \frac{u(x,0)}{\sqrt{\pi t}},\quad (x,t)\in Q
\end{equation} 
and
\begin{equation}
\label{eq:l02}
D_t^\frac12 D_t^\frac12 u (x,t)= \pp_t u(x,t),\quad (x,t)\in Q.
\end{equation}

By \eqref{eq:l01}  and \eqref{eq:ini_condi}, we obtain 
\begin{equation}
\label{eq:l03}
\pp_t^{\frac12} u (x,t) =D_t^\frac12 u(x,t),\quad (x,t)\in Q. 
\end{equation}
Setting $v=\pp_t^\frac12 u$, \eqref{eq:l01} gives us
\begin{equation}
\label{eq:l04}
\pp_t^{\frac12} v (x,t) =D_t^\frac12 v(x,t) - \frac{v(x,0)}{\sqrt{\pi t}},\quad (x,t)\in Q.
\end{equation}
By \eqref{eq:l03} and \eqref{eq:l02}, we see that
\begin{equation*}
D_t^\frac12 v(x,t)=D_t^\frac12 \pp_t^\frac12 u(x,t)=D_t^\frac12 D_t^\frac12 u(x,t)=\pp_t u(x,t),\quad (x,t)\in Q.
\end{equation*}
Together this with \eqref{eq:l04} , we have
\begin{equation}
\label{eq:l06}
\pp_t^\frac12 \pp_t^\frac12 u(x,t) = \pp_t  u(x,t)- \frac{\pp_t^\frac12u (x,0)}{\sqrt{\pi t}},\quad (x,t)\in Q.
\end{equation}

By the definition of the Caputo derivative,
\begin{equation}
\label{eq:l07}
\pp_t^\frac12 \pp_t u(x,t) 
=\frac{1}{\Gamma \left( \frac12 \right)} \int_0^t \frac{\pp_t^2 u(x,\tau)}{(t-\tau)^\frac12} \,d\tau,\quad (x,t)\in Q
\end{equation}
and 
\begin{align*}
\pp_t^\frac12 u(x,t)
&=\frac{1}{\Gamma \left( \frac12 \right)} \int_0^t \frac{\pp_t u(x,\tau)}{(t-\tau)^\frac12} \,d\tau\\
&=\frac{1}{\Gamma \left( \frac12 \right)} \left( 2t^\frac12 \pp_t u (x,0)+2 \int_0^t (t-\tau)^\frac12 \pp_t^2u(x,\tau) \,d\tau \right),\quad (x,t)\in Q.
\end{align*}
Therefore
\begin{equation}
\label{eq:l08}
\pp_t\pp_t^\frac12 u(x,t)
=\frac1{\Gamma \left( \frac12 \right)} \left( \frac1{t^\frac12} \pp_t u (x,0)+\int_0^t \frac{\pp_t^2 u(x,\tau)}{(t-\tau)^\frac12} \,d\tau \right),\quad (x,t)\in Q.
\end{equation}
By \eqref{eq:l07} and \eqref{eq:l08}, we obtain
\begin{equation}
\label{eq:l09}
\pp_t^\frac12 \pp_t u(x,t)  - \pp_t\pp_t^\frac12 u(x,t)
=-\frac{\pp_t u (x,0)}{\sqrt{\pi t}},\quad (x,t)\in Q.
\end{equation}
Here we note that $\Gamma \left( \frac12 \right) =\sqrt{\pi}$. 

\eqref{eq:l06} and \eqref{eq:l09} yield
\begin{align}
\label{eq:l10}
&\rho_2^2\pp_t^\frac12\pp_t^\frac12 u(x,t)
+ \rho_1\rho_2\pp_t^\frac12 \pp_t u(x,t) 
- \rho_1\rho_2\pp_t\pp_t^\frac12 u(x,t)
\\
&=
\rho_2^2\pp_t u(x,t)
-\frac{\rho_2^2\pp_t^\frac12 u(x,0)}{\sqrt{\pi t}}
-\frac{\rho_1\rho_2\pp_t u (x,0)}{\sqrt{\pi t}}
\nonumber \\
&=\rho_2^2\pp_t u(x,t)
-\frac{\rho_2}{\sqrt{\pi t}} \left( \rho_2\pp_t^\frac12 u(x,0) +\rho_1\pp_t u (x,0) \right),\quad (x,t)\in Q. 
\nonumber 
\end{align}
By  \eqref{eq:L2_eq} and \eqref{eq:ini_condi}, we see that
\begin{equation*}
\rho_2\pp_t^\frac12 u(x,0)+\rho_1\pp_t u(x,0)=g(x,0),\quad x\in \Omega.
\end{equation*}
Combining this with \eqref{eq:l10}, we have
\begin{align}
\label{eq:l12}
&\rho_2^2\pp_t^\frac12\pp_t^\frac12 u(x,t)
+ \rho_1\rho_2\pp_t^\frac12 \pp_t u(x,t) 
- \rho_1\rho_2\pp_t\pp_t^\frac12 u(x,t) \\
&= \rho_2^2\pp_t u(x,t)-\frac{\rho_2 g(x,0)}{\sqrt{\pi t}},\quad (x,t)\in Q. \nonumber
\end{align}

By \eqref{eq:l00} and \eqref{eq:l12}, we conclude that
\begin{equation*}
\rho_2^2 \pp_t u(x,t) 
-(\rho_1\pp_t -L)^2 u(x,t)
=\left[ \rho_2\pp_t^\frac12- (\rho_1\pp_t - L) \right]g(x,t) +\frac{\rho_2 g(x,0)}{\sqrt{\pi t}},\quad (x,t)\in Q. 
\end{equation*}. 
Thus we complete the proof. 
\end{proof}

Let us introduce the weight function for the Carleman estimate. 
Let $\beta > 0$, $t_0\in (0,T)$. 
Let $d\in C^2(\overline{\Omega})$ satisfy $|\nabla d|\neq 0$ in $\overline{\Omega}$. 
We set the weight function:
\begin{equation*}
\varphi(x,t)=e^{\la \psi(x,t)}, \quad \psi(x,t)=d(x)-\beta(t-t_0)^2, \quad (x,t) \in Q .
\end{equation*}
Let $V\subset Q$ be a sub-domain with the smooth boundary $\pp V$. 
And we set $w(x,t)=\rho_1\pp_t u(x,t)-Lu(x,t)$, $(x,t)\in Q$. 

Now we ready to state the Carleman estimate for \eqref{eq:L2_eq}. 

\begin{thm}
\label{thm:ce0}
There exists $\la_0>0$ such that for any $\la>\la_0$, 
we can choose $s_0(\la)>0$ satisfying:
there exists $C=C(s_0,\la_0)>0$ such that 
\begin{align}
\label{eq:pce}
&
\int_V
\Biggl[
\frac1{s^2}  
\left(
|\pp_t^2 u|^2 
+
\sum_{i,j=1}^n|\pp_t\pp_i \pp_j u|^2
\right)
+\la^2|\nabla \pp_t u|^2  \\
&\qquad
+s^2  \la^4\left( |\pp_t u|^2 
+\sum_{i,j=1}^n|\pp_i \pp_j u|^2
\right)
+s^4 \la^6  |\nabla u|^2 
+s^6 \la^8 |u|^2
\Biggr]
e^{2s\va}\,dxdt 
\nonumber\\
&
\leq 
C
\int_V \left|\left[\rho_2^2\pp_t - (\rho_1\pp_t - L)^2\right]u\right|^2 e^{2s\va}\,dxdt
+Ce^{Cs} B
\nonumber 
\end{align}
for 
all $s> s_0$ and 
all $u \in L^2(0,T;H^4(\Omega))\cap H^1(0,T;H^2(\Omega))\cap  H^2(0,T;L^2(\Omega))$, 
where $B$ is the following boundary term:
\begin{equation*}
B=
\int_{\pp V}
\left(
|\nabla w|^2 + |\pp_t w|^2 + |w|^2 
+|\nabla \pp_t u|^2 + |\nabla u|^2 
+|\pp_t^2 u|^2 +|\pp_t u|^2 + |u|^2
\right)
\,dsdt .
\end{equation*}
\end{thm}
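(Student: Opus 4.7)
The plan is to reduce the estimate to the classical local parabolic Carleman estimate by means of the auxiliary variable $w=\rho_1\pp_t u-Lu$ already introduced before the statement. Writing $P=\rho_1\pp_t-L$ and $G=[\rho_2^2\pp_t-(\rho_1\pp_t-L)^2]u$, the identity $P^2 u=P(Pu)=Pw$ rewrites the differential expression in the right hand side of \eqref{eq:pce} as the coupled parabolic system
\begin{equation*}
Pu=w,\qquad Pw=\rho_2^2\pp_t u-G.
\end{equation*}
Since the coefficients of $L$ are time-independent, a further differentiation in $t$ gives $P(\pp_t u)=\pp_t w$. Each identity is a second order parabolic equation to which standard Carleman machinery applies.

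Next I would invoke the classical local parabolic Carleman estimate for $P$ with the weight $\va=e^{\la\psi}$: there exist $\la_0,s_0>0$ such that, for every admissible $v$ on the sub-domain $V$,
\begin{equation*}
\int_V\Bigl[s^3\la^4|v|^2+s\la^2|\nabla v|^2+\tfrac{1}{s}\bigl(|\pp_t v|^2+\sum_{i,j=1}^n|\pp_i\pp_j v|^2\bigr)\Bigr]e^{2s\va}\,dxdt\le C\int_V|Pv|^2 e^{2s\va}\,dxdt+Ce^{Cs}B[v],
\end{equation*}
where $B[v]$ is a boundary integral on $\pp V$ in $v,\nabla v,\pp_t v$. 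I apply this three times, to $v=u$, $v=w$ and $v=\pp_t u$, obtaining inequalities (I), (II), (III).

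The assembly is carried out by weighting. Multiplying (I) by $s^3\la^4$ and (III) by $s^{-1}$ and adding, the two LHSs jointly reproduce precisely the integrand of the LHS of \eqref{eq:pce}: from (I) we collect $s^6\la^8|u|^2+s^4\la^6|\nabla u|^2+s^2\la^4(|\pp_t u|^2+\sum|\pp_i\pp_j u|^2)$, and from (III) we collect $s^2\la^4|\pp_t u|^2+\la^2|\nabla\pp_t u|^2+s^{-2}(|\pp_t^2 u|^2+\sum|\pp_t\pp_i\pp_j u|^2)$. The corresponding weighted right hand side contains $Cs^3\la^4\int_V|w|^2 e^{2s\va}\,dxdt$ and $Cs^{-1}\int_V|\pp_t w|^2 e^{2s\va}\,dxdt$, both of which are directly dominated by the LHS of (II); substituting, they are controlled by $C\int_V|\rho_2^2\pp_t u-G|^2 e^{2s\va}\,dxdt+Ce^{Cs}B[w]$.

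The main obstacle is the final absorption. The elementary inequality $|\rho_2^2\pp_t u-G|^2\le 2\rho_2^4|\pp_t u|^2+2|G|^2$ introduces a parasitic $C\int_V|\pp_t u|^2 e^{2s\va}\,dxdt$ on the right hand side, which has to be absorbed by the LHS term $s^2\la^4|\pp_t u|^2$; this forces $s^2\la^4\ge 4C\rho_2^4$, and the resulting admissibility range of $(s,\la)$ must be chosen consistently with that of the three parabolic Carleman estimates (I)--(III). The polynomial prefactors $s^3\la^4$, $1$ and $s^{-1}$ multiplying the respective boundary contributions $B[u]$, $B[w]$, $B[\pp_t u]$ are absorbed into a common $e^{Cs}$ at the cost of enlarging $C$, and the union of these boundary integrals is exactly the boundary term $B$ displayed in the theorem. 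Once these absorptions have been verified, the estimate \eqref{eq:pce} follows.
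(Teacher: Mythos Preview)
Your proposal is correct and follows essentially the same route as the paper: introduce $w=\rho_1\pp_t u-Lu$, apply the parabolic Carleman estimate (Lemma~\ref{lem:celem}) to each of $u$, $w$, and $\pp_t u$, weight and add, and absorb the parasitic $|\pp_t u|^2$ term by the $s^2\la^4|\pp_t u|^2$ on the left. The only cosmetic difference is the order of operations: the paper first absorbs $|\pp_t u|^2$ in the $w$-estimate using the $u$-estimate (obtaining a clean bound $\int_V s^{-1}|\pp_t w|^2 e^{2s\va}\,dxdt\le C\int_V|G|^2 e^{2s\va}\,dxdt+Ce^{Cs}B_1$), and only then invokes the Carleman estimate for $\pp_t u$ with right side $s^{-1}|P(\pp_t u)|^2=s^{-1}|\pp_t w|^2$; you instead apply all three Carleman estimates first and absorb at the end.
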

%
%

As Lemma \ref{lem:celem} and Lemma \ref{lem:celeme}, 
we introduce previous results of Carleman estimates with two large parameters 
for the second order partial differential equations. 
As for the proofs of two lemmas, see e.g., \cite{EI, IK1, IK2, Y}.  
Eller and Isakov \cite{EI} established Carleman estimates by using differential quadratic forms, 
an approach of H\"{o}rmander \cite{H}. 
On the other hand, we see the direct derivation of Carleman estimates by using integrating by parts in Yamamoto \cite{Y}.  

To prove the above Theorem \ref{thm:ce0},
we use the following Carleman estimate for parabolic equations. 
\begin{lem}
\label{lem:celem}
There exists $\la_0>0$ such that for any $\la>\la_0$, 
we can choose $s_0(\la)>0$ satisfying: 
there exists $C=C(s_0,\la_0)>0$ such that 
\begin{align*}
&
\int_V
\left[
\frac1{s}  
\left(
|\pp_t v|^2 
+
\sum_{i,j=1}^n|\pp_i \pp_j v|^2
\right)
+s\la^2 |\nabla v|^2 
+s^3 \la^4 |v|^2
\right]
e^{2s\va}\,dxdt \\
&
\leq 
C\int_V |(\rho_1\pp_t -L) v|^2 e^{2s\va}\,dxdt
+
Ce^{Cs}
\int_{\pp V}
\left(
|\nabla v|^2 + |\pp_t v|^2 + |v|^2 
\right)
\,dsdt
\end{align*}
for 
all $s> s_0$ and 
all $v \in L^2(0,T;H^2(\Omega))\cap  H^1(0,T;L^2(\Omega))$. 
\end{lem}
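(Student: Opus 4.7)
The plan is to prove this weighted estimate by the classical conjugation-and-symmetric-antisymmetric-decomposition argument for parabolic Carleman estimates with two large parameters, in the style of \cite{Y} (direct integration by parts) rather than the differential quadratic form framework of \cite{EI}. I would set $z := e^{s\va} v$, so that $v = e^{-s\va} z$, and compute the conjugated operator
\begin{equation*}
P_s z \;:=\; e^{s\va}(\rho_1\pp_t - L)(e^{-s\va} z) \;=\; L_s z + M_s z,
\end{equation*}
where $L_s$ collects the formally $L^2(V)$-symmetric pieces and $M_s$ the antisymmetric ones. Squaring in $L^2(V)$ gives the fundamental identity
\begin{equation*}
\int_V e^{2s\va}|(\rho_1\pp_t - L)v|^2 \,dx\,dt \;=\; \|L_s z\|^2 + \|M_s z\|^2 + 2(L_s z, M_s z)_{L^2(V)},
\end{equation*}
so the task reduces to producing a lower bound on the cross term $(L_s z, M_s z)$.

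The evaluation of $(L_s z, M_s z)$ is carried out by systematic integration by parts. The two-parameter structure of $\va = e^{\la\psi}$ is essential here: each spatial derivative of $\va$ yields an extra factor of $\la$ via $\nabla\va = \la(\nabla\psi)\va$, while each commutator generated by conjugation with $e^{-s\va}$ yields an extra factor of $s$. After full expansion, the leading interior contribution is a positive definite quadratic form dominating $s^3\la^4|\nabla\psi|^4 |z|^2 + s\la^2|\nabla\psi|^2|\nabla z|^2$, together with lower-order bulk terms and boundary integrals on $\pp V$. The lower-order bulk terms are absorbed by first fixing $\la$ large enough to beat the $\la$-independent contributions and then choosing $s\geq s_0(\la)$; the hypothesis $|\nabla d|\neq 0$ in $\overline{\Omega}$ keeps $|\nabla\psi|$ uniformly away from zero (the pseudoconvexity condition), ensuring positivity of the leading coefficients.

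The second-order terms $s^{-1}|\pp_t v|^2$ and $s^{-1}|\pp_i\pp_j v|^2$ on the left of the claimed estimate are recovered by extracting them from $\|L_s z\|^2 + \|M_s z\|^2$, which is itself controlled by $\|P_s z\|^2$ modulo the already-bounded cross term. The principal parts of $L_s$ and $M_s$ contain $\pp_i\pp_j z$ and $\pp_t z$, so elliptic regularity at the $z$-level yields these derivatives, and the weight transfer $|\pp^\alpha z|^2 \lesssim e^{2s\va}(|\pp^\alpha v|^2 + (s\la)^{2|\alpha|}|v|^2)$ returns everything to $v$; the drop by one power of $s$ relative to the $s^3\la^4|z|^2$ term is precisely the cost of absorbing commutators against the already-established positive bulk. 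The boundary contributions produced by every integration by parts, after weight transfer, are dominated by $Ce^{Cs}\int_{\pp V}(|\nabla v|^2 + |\pp_t v|^2 + |v|^2)\,ds\,dt$, giving the right-hand side as stated.

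The main obstacle is the combinatorial bookkeeping of the cross term: after expansion one must collect many individual contributions involving products of derivatives of $\va$, $a_{ij}$, $b_j$, $c$, and $z$, and verify that the signs and sizes line up so that absorption by $\la$ large and then $s\geq s_0(\la)$ really works uniformly over $\overline{\Omega}$. The specific choice $\va = e^{\la\psi}$ rather than $\la\psi$ itself is what produces the $\la^2$ and $\la^4$ gains needed to absorb these commutators. This is why the paper cites the existing detailed derivations in \cite{EI, IK1, IK2, Y} rather than recomputing the expansion from scratch.
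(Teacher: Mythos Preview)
Your sketch is correct and follows exactly the approach the paper defers to: the paper does not prove this lemma at all but simply cites it as a known two-parameter parabolic Carleman estimate from \cite{EI, IK1, IK2, Y}, and what you have outlined is precisely the conjugation-and-decomposition argument carried out in those references (in particular \cite{Y}). Your own closing remark already anticipates this, so there is nothing to correct.
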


Moreover we state the Carleman estimate for the elliptic equation 
which we use in the proof of the stability estimate in our inverse problem. 

Let $\widetilde{L}$ be a symmetric uniformly elliptic operator:
\begin{equation*}
\widetilde{L} \wv(x) :=\sum_{i,j=1}^n \pp_i (\wa_{ij}(x) \pp_j \wv(x))
+\sum_{j=1}^n \wb_j (x)\pp_j \wv(x)
+ \wc(x)\wv(x),\ \wv \in \Omega. 
\end{equation*}
We assume that $\wa_{ij}\in C^1(\overline{\Omega})$, $\wa_{ij}= \wa_{ji}$ ($1\leq i,j \leq n$) and that there exists a constant $\wmm>0$ such that 
\begin{equation*}
\frac1{\wmm} |\xi|^2 
\leq
\sum_{i,j=1}^n \wa_{ij}(x) \xi_i \xi_j
\leq
\wmm |\xi|^2,
\quad
\xi=(\xi_1,\ldots, \xi_n) \in \R^n,\ 
x \in \overline{\Omega}, 
\end{equation*}
and $\wb_j \in C(\overline{\Omega})$ ($1\leq j\leq n$), $\wc \in C(\overline{\Omega})$. 

Set $\va_0(x):=\va(x,t_0)=e^{\la d(x)}$, $x\in \Omega$. 
Let $D\subset \Omega$ be a sub-domain with the smooth boundary $\pp D$. 
Then we have the following Lemma. 
\begin{lem}
\label{lem:celeme}
There exists $\la_0>0$ such that for any $\la>\la_0$, 
we can choose $s_0(\la)>0$ satisfying: 
there exists 
$C=C(s_0,\la_0)>0$ such that  
\begin{align*}
&
\int_D
\left(
\frac1{s}  
\sum_{i,j=1}^n|\pp_i \pp_j \wv|^2
+s\la^2 |\nabla \wv|^2 
+s^3\la^4 |\wv|^2
\right)
e^{2s \va_0}\,dx 
\\
&
\leq 
C\int_D |\widetilde{L} \wv|^2 e^{2s \va_0}\,dx
+
Ce^{Cs}
\int_{\pp D}
\left(
|\nabla \wv|^2 + |\wv|^2 
\right)
\,dsdt
\end{align*}
for 
all $s> s_0$ and 
all $\wv \in H^2(\Omega)$. 
\end{lem}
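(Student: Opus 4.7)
The plan is to follow the standard conjugation-and-symmetrization scheme for elliptic Carleman estimates, as in the cited references \cite{EI, IK1, IK2, Y}. I would substitute $W = e^{s\va_0}\wv$, so that proving the target estimate for $\wv$ reduces to proving a coercivity estimate for the conjugated operator $\mathcal{L}_s W := e^{s\va_0}\widetilde{L}(e^{-s\va_0}W)$ acting on $W$. A direct computation, using symmetry of $\wa_{ij}$, gives
\begin{align*}
\mathcal{L}_s W &= \sum_{i,j}\pp_i(\wa_{ij}\pp_j W) - 2s\sum_{i,j}\wa_{ij}\pp_i\va_0\,\pp_j W \\
&\quad + \Big[s^2\sum_{i,j}\wa_{ij}\pp_i\va_0\pp_j\va_0 - s\sum_{i,j}\pp_i(\wa_{ij}\pp_j\va_0)\Big]W + \sum_j \wb_j\pp_j W - s\sum_j \wb_j\pp_j\va_0\,W + \wc W.
\end{align*}
I would then split $\mathcal{L}_s = P_s + Q_s + R_s$ with $P_s$ the formally self-adjoint piece (collecting $\sum\pp_i(\wa_{ij}\pp_j\cdot)$ and the $s^2$ multiplier), $Q_s$ the formally antisymmetric piece (the first-order part $-2s\sum\wa_{ij}\pp_i\va_0\pp_j$ together with the zeroth-order correction $-s\sum\pp_i(\wa_{ij}\pp_j\va_0)$ needed to make it truly antisymmetric), and $R_s$ the remaining lower-order terms from $\wb_j,\wc$.

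Second I would expand $\|\mathcal{L}_s W\|_{L^2(D)}^2 = \|P_s W\|^2 + \|Q_s W\|^2 + 2\langle P_s W, Q_s W\rangle$ modulo terms involving $R_s W$ which are of strictly lower order in $s\la$. The key point is the cross term: since $P_s$ is self-adjoint and $Q_s$ antisymmetric, integration by parts gives $2\langle P_s W, Q_s W\rangle = \langle[P_s,Q_s]W,W\rangle + (\textrm{boundary integral on }\pp D)$, and the commutator produces a positive interior quadratic form in $W$ and $\nabla W$. The essential mechanism is that $\va_0 = e^{\la d}$ with $|\nabla d|\neq 0$ gives $\pp_i\va_0 = \la(\pp_i d)e^{\la d}$, so each derivative of $\va_0$ injects a factor of $\la$; combined with the uniform ellipticity bound $\sum\wa_{ij}\pp_i d\,\pp_j d \geq \tfrac{1}{\wmm}|\nabla d|^2 > 0$, the commutator is bounded below by $c_1 s\la^2\int_D|\nabla W|^2 + c_2 s^3\la^4\int_D|W|^2$ for $\la$ large.

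To recover the Hessian term $\tfrac{1}{s}\int_D\sum|\pp_i\pp_j W|^2$, I would use that the principal part of $P_s$ is the elliptic operator $\sum\pp_i(\wa_{ij}\pp_j\cdot)$, so that uniform ellipticity and a G\aa rding-type inequality give $\|P_s W\|_{L^2}^2 \geq c\|\nabla^2 W\|_{L^2}^2 - (\textrm{terms already controlled})$; dividing by $s$ yields the desired bound. Choosing first $\la\geq\la_0$ large enough for the positive commutator contributions to dominate every $\la$-independent error (from $R_s$, $\pp_i\wa_{ij}$, $\wb_j$, $\wc$) and then $s\geq s_0(\la)$ large enough to absorb the remaining lower-order couplings, one obtains a coercivity estimate for $W$. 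Reverting $W = e^{s\va_0}\wv$ turns $\nabla W = e^{s\va_0}(\nabla\wv + s\wv\nabla\va_0)$ (and similarly for the Hessian) into the three stated weighted norms of $\wv$ on the left, transforms $\|\mathcal{L}_s W\|_{L^2(D)}^2$ into $\int_D|\widetilde{L}\wv|^2 e^{2s\va_0}dx$ on the right, and leaves a boundary integral whose $e^{2s\va_0}$ factor is bounded by $e^{Cs}$ on $\pp D$, accounting for the $Ce^{Cs}$ prefactor.

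The main obstacle is the commutator computation: $[P_s,Q_s]$ produces many terms of mixed order in $s$ and $\la$, and one must verify that the leading positive contributions, which encode H\"ormander's pseudo-convexity condition for the weight $d$ with respect to $\widetilde{L}$, genuinely dominate the error terms arising from derivatives of $\wa_{ij}$ and from the coefficients $\wb_j,\wc$. The two-parameter structure is essential: the $\la$-large step secures the positivity of the principal commutator symbol, while the subsequent $s$-large step then disposes of the boundary-localized and remaining crossed terms.
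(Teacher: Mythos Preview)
Your outline is the standard conjugation--commutator derivation of a two-parameter elliptic Carleman estimate, and it is essentially correct as a sketch. Note, however, that the paper does not give its own proof of this lemma: it states Lemma~\ref{lem:celeme} (together with the parabolic Lemma~\ref{lem:celem}) as a known result and refers the reader to \cite{EI, IK1, IK2, Y} for the proofs. So there is no ``paper's proof'' to compare against; your proposal is precisely the approach carried out in those cited references (the direct integration-by-parts route of \cite{Y} and the differential-quadratic-form route of \cite{EI}), and in that sense it matches what the paper invokes.

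One small point worth tightening if you actually write the argument out: in the commutator step you assert that $\langle[P_s,Q_s]W,W\rangle$ is bounded below by $c_1 s\la^2\int_D|\nabla W|^2 + c_2 s^3\la^4\int_D|W|^2$ ``for $\la$ large,'' relying on $|\nabla d|>0$ and ellipticity. This is correct, but the positivity of the $s\la^2|\nabla W|^2$ contribution requires not just $|\nabla d|>0$ but also that the second-order terms in $\la$ coming from $\nabla^2\va_0 = \la^2 e^{\la d}\nabla d\otimes\nabla d + \la e^{\la d}\nabla^2 d$ dominate; the exponential choice $\va_0=e^{\la d}$ is what makes the $\la^2$ rank-one piece do the work and renders the Hessian assumption on $d$ unnecessary. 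You state this implicitly but it is the crux of why two large parameters are needed, so it deserves to be made explicit.
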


\begin{proof}[Proof of Theorem \ref{thm:ce0}]
Since 
$w=\rho_1\pp_t u - L u$ in $Q$, 
we have
\begin{equation*}
\rho_2^2\pp_t u(x,t)- (\rho_1\pp_t-  L )w(x,t) = G(x,t),\quad (x,t)\in Q
\end{equation*}
by \eqref{eq:lem01}. 
So we obtain two parabolic equations with respect to $u$ and $w$:
\begin{align}
\label{eq:ce01}
&
\rho_1\pp_t w(x,t) - L w(x,t) = \rho_2^2\pp_t u(x,t) -G(x,t) ,\quad (x,t)\in Q,\\
\label{eq:ce02}
&
\rho_1\pp_t u(x,t) - L u(x,t) = w(x,t) ,\quad (x,t)\in Q.
\end{align}
Applying the Lemma \ref{lem:celem} 
to \eqref{eq:ce01} and \eqref{eq:ce02}, we have
\begin{align}
\label{eq:ce03}
&\int_V
\left(
\frac{1}{s} |\pp_t w|^2 
+s\la^2|\nabla w|^2
+s^3 \la^4 |w|^2 
\right)e^{2s\va}\,dxdt
\\
&\leq 
C\int_V |\pp_t u|^2 e^{2s\va}\,dxdt
+C\int_V |G|^2 e^{2s\va}\,dxdt 
\nonumber \\
&\quad
+Ce^{Cs}
\int_{\pp V}
\left(
|\nabla w|^2 + |\pp_t w|^2 + |w|^2 
\right)
\,dsdt
\nonumber 
\end{align}
and 
\begin{align}
\label{eq:ce04}
&\int_V
\left[
\frac{1}{s} 
\left(
|\pp_t u|^2 
+\sum_{i,j=1}^n |\pp_i \pp_j u|^2
\right)
+s\la^2|\nabla u|^2
+s^3 \la^4 |u|^2 
\right]e^{2s\va}\,dxdt
\\
&\leq 
C\int_V |w|^2 e^{2s\va}\,dxdt
+Ce^{Cs}
\int_{\pp V}
\left(
 |\nabla u|^2 +|\pp_t u|^2 + |u|^2
\right)
\,dsdt
.
\nonumber 
\end{align}
Substituting the estimate of $|\pp_t u|^2$ in \eqref{eq:ce04} into 
the right-hand side of \eqref{eq:ce03}, we obtain
\begin{align}
&\int_V
\left(
\frac{1}{s} |\pp_t w|^2 
+s\la^2 |\nabla w|^2
+s^3 \la^4 |w|^2 
\right)e^{2s\va}\,dxdt
\nonumber \\
&\leq 
C\int_V s|w|^2 e^{2s\va}\,dxdt
+C\int_V |G|^2 e^{2s\va}\,dxdt 
+Ce^{Cs} B_1 
\nonumber 
\end{align}
where 
\begin{equation*}
B_1=
\int_{\pp V}
\left(
|\nabla w|^2 + |\pp_t w|^2 + |w|^2 
+ |\nabla u|^2 +|\pp_t u|^2 + |u|^2
\right)
\,dsdt
.
\end{equation*}
Taking sufficient large $s>0$, we can absorb the first term on the right-hand side into the left-hand side, 
we have
\begin{align}
\label{eq:ce05}
&\int_V
\left(
\frac{1}{s} |\pp_t w|^2 
+s \la^2|\nabla w|^2
+s^3 \la^4 |w|^2 
\right)
e^{2s\va}\,dxdt
\\
&
\leq 
C\int_V |G|^2 e^{2s\va}\,dxdt
+Ce^{Cs} B_1
\nonumber 
\end{align}
Combining this with \eqref{eq:ce04}, we get
\begin{align}
\label{eq:ce06}
&\int_V
\left[
s^2 \la^4 
\left(
|\pp_t u|^2
+\sum_{i,j=1}^n |\pp_i \pp_j u|^2
\right) 
+s^4 \la^6 |\nabla u|^2
+s^6 \la^8 |u|^2 
\right]e^{2s\va}\,dxdt\\
&
\leq 
C\int_V |G|^2 e^{2s\va}\,dxdt
+Ce^{Cs} B_1.
\nonumber 
\end{align}
By \eqref{eq:ce02} and \eqref{eq:ce05}, we have
\begin{equation}
\label{eq:ce07}
\int_V
\frac1{s} |\pp_t (\rho_1\pp_t u -L u)|^2 
e^{2s\va}\,dxdt 
\leq 
C\int_V |G|^2 e^{2s\va}\,dxdt 
+Ce^{Cs} B_1.
\end{equation}
Setting $u_0=\pp_t u$, the left-hand side of \eqref{eq:ce07} gives us
\begin{equation}
\label{eq:ce10}
\int_V
\frac1{s} |\rho_1\pp_t u_0 -L u_0|^2e^{2s\va}\,dxdt 
\leq 
C\int_V |G|^2 e^{2s\va}\,dxdt 
+Ce^{Cs} B_1.
\end{equation}
By Lemma \ref{lem:celem}, 
we may estimate the left-hand side of \eqref{eq:ce10} from below 
and
we get
\begin{align*}
&
\int_V
\left[
\frac1{s^2} 
\left(
|\pp_t u_0|^2 
+
\sum_{i,j=1}^n |\pp_i\pp_j u_0|^2
\right)
+\la^2 |\nabla u_0|^2
+s^2 \la^4 |u_0|^2 
\right]e^{2s\va}\,dxdt\\
&\leq 
C\int_V |G|^2 e^{2s\va}\,dxdt
+Ce^{Cs}
\int_{\pp V}
\left(
 |\nabla u_0|^2 +|\pp_t u_0|^2 + |u_0|^2
\right)
\,dsdt
+Ce^{Cs} B_1,
\end{align*}
that is,
\begin{align}
\label{eq:ce11}
&\int_V
\left[
\frac1{s^2}  
\left(
|\pp_t^2 u|^2 
+
\sum_{i,j=1}^n |\pp_t\pp_i\pp_j u|^2
\right)
+\la^2|\nabla \pp_t u|^2
+s^2 \la^4 |\pp_t u|^2 
\right]e^{2s\va}\,dxdt
\\
&\leq 
C\int_V |G|^2 e^{2s\va}\,dxdt 
+Ce^{Cs} B.
\nonumber 
\end{align}
Together this with \eqref{eq:ce06}, we have \eqref{eq:pce}. 
\end{proof}

\section{Proof of stability estimate}

We choose a suitable weight functions $\varphi$, that is, $\beta>0$ and a distance function $d$.
Take $\delta>0$ such that 
\begin{equation*}
0<t_0-2\delta<t_0<t_0+2\delta<T.
\end{equation*}
Taking a bounded domain $\Omega_0$ with the smooth boundary $\pp\Omega_0$ such that 
\begin{equation*}
\Omega \subset \Omega_0,\quad 
\Omega \neq \Omega_0,\quad
\overline{\gamma} =\overline{\pp\Omega\cap\Omega_0},\quad
\pp\Omega\setminus\gamma \subset \pp\Omega_0,
\end{equation*} 
we choose an open subset $\omega_0 \Subset \Omega_0\setminus \overline{\Omega}$. 
There exists a distance function $d\in C^2(\R^n)$ such that 
\begin{equation*}
d(x)>0,\  x\in \Omega_0,\quad
d(x)=0,\  x\in \pp\Omega_0,\quad
|\nabla d(x)|>0,\ x\in \overline{\Omega}.
\end{equation*}
The existence of such a function is proved in \cite{FIm} (see the proof of Lemma 1.1 in \cite{FIm}). 
We take $\varepsilon_0\in (0,1)$ such that 
\begin{equation*}
\omega \subset 
\left\{ x\in \Omega_0 \relmiddle| d(x)>\varepsilon \| d \|_{C(\overline{\Omega_0})}
\right\}\cap \overline{\Omega},\quad
\varepsilon \in (0,\varepsilon_0].
\end{equation*}
Take
$\beta>0$ such that 
\begin{equation*}
\frac{\| d \|_{C(\overline{\Omega_0})}}{4\delta^2} <\beta < \frac{\| d \|_{C(\overline{\Omega_0})}}{3\delta^2}.
\end{equation*}
Fixing $\varepsilon \in (0,\varepsilon_0]$, we set 
\begin{equation*}
\mu_k=\varepsilon \left( \frac{k}{3}\| d \|_{C(\overline{\Omega_0})} -\beta\delta^2\right)>0, \quad
k=1,2,3.
\end{equation*}
We set 
$
Q_k=
\{(x,t)\in Q \mid \psi(x,t)>\mu_k \}
$, 
$Q_k^-=
Q_k\cap \{ (x,t)\in Q \mid t <t_0 \}$, 
$
\Omega_k=
Q_k \cap \{ (x,t)\in Q \mid t=t_0\}
$ for $k=1,2,3$. 
Since $0<\mu_1<\mu_2<\mu_3$, we have 
\begin{equation*}
\omega\times (t_0-\sqrt{\varepsilon}\delta,t_0+\sqrt{\varepsilon}\delta) \subset Q_3\subset Q_2\subset Q_1
\subset \overline{\Omega} \times (t_0-2\delta,t_0+2\delta) \subset \overline{Q}
\end{equation*}
and 
$\omega \subset \Omega_3 \subset \Omega_2 \subset \Omega_1 \subset \overline{\Omega}$. 
For this choice of the weight function when we are given the interior domain $\omega$ and the sub-boundary $\gamma$, we may refer to \S 5 in Yamamoto \cite{Y}. 

By Lemma \ref{lem:halftoone}, \eqref{eq:eq01}  gives us the following equation 
\begin{equation}
\label{eq:pr01}
\rho_2^2\pp_t u (x,t)
-(\rho_1\pp_t -L)^2 u(x,t)
=F(x,t),\quad (x,t)\in Q,
\end{equation}
where
\begin{align}
\label{eq:pr02}
F(x,t)
&=\left[ \rho_2\pp_t^\frac12- (\rho_1\pp_t - L) \right] \left(f(x)R(x,t)\right) +\rho_2 f(x)\frac{R(x,0)}{\sqrt{\pi t}}\\
&=
R(x,t) 
\sum_{i,j=1}^n \pp_i(a_{ij}(x)\pp_j f(x)) \nonumber\\
&\quad
+\sum_{j=1}^n \left( 2 \sum_{i=1}^n a_{ij}(x)\pp_i R(x,t) + b_j(x) R(x,t)\right) \pp_j f(x) \nonumber\\
&\quad
+\Biggl[
\rho_2\pp_t^{\frac12}R(x,t)-\rho_1\pp_t R(x,t) 
+\sum_{i,j=1}^n \pp_i(a_{ij}(x)\pp_j R(x,t)) \nonumber \\
&\qquad\quad
\sum_{j=1}^n b_j(x) \pp_j R(x,t)+c(x) R(x,t)
+
\frac{\rho_2 R(x,0)}{\sqrt{\pi t}}
\Biggr]f(x),\  (x,t) \in Q
\nonumber . 
\end{align}
Expanding the left-hand side of \eqref{eq:pr01} , we get the following equation. 
\begin{equation}
\label{eq:pr03}
\rho_2^2\pp_t u (x,t)
-\rho_1^2\pp_t^2 u(x,t) +2\rho_1\pp_t L u(x,t) -L^2 u(x,t)
=F(x,t), 
\quad 
(x,t)\in Q. 
\end{equation}
By the equation \eqref{eq:pr03} at $t=t_0$, we have
\begin{equation}
\label{eq:pr04}
\rho_2^2\pp_t u (x,t_0)
-\rho_1^2\pp_t^2 u(x,t_0) +2\rho_1\pp_t L u(x,t_0) -L^2 u(x,t_0)
=F(x,t_0),\quad x\in \Omega.
\end{equation} 
Taking the weighted $L^2$ norm of \eqref{eq:pr04} in $\Omega_2$, 
we obtain
\begin{align}
\label{eq:pr05}
&\int_{\Omega_2} |F(x,t_0)|^2e^{2s\va(x,t_0)}\,dx \\
&
\leq 
C 
\int_{\Omega_2} |\pp_t u(x,t_0)|^2e^{2s\va(x,t_0)}\,dx
+
C
\int_{\Omega_2} |\pp_t^2 u(x,t_0)|^2e^{2s\va(x,t_0)}\,dx
\nonumber \\
&+
C 
\int_{\Omega_2} |\pp_t L u(x,t_0)|^2e^{2s\va(x,t_0)}\,dx
+
C
\int_{\Omega_2} \sum_{|\alpha|\leq 4} |\pp_x^\alpha u(x,t_0)|^2e^{2s\va(x,t_0)}\,dx .
\nonumber 
\end{align}
Henceforth we estimate from the first term to the third term on the right-hand side of \eqref{eq:pr05} 
by the Carleman estimate (Theorem \ref{thm:ce0}). 

To use the Carleman estimate, 
we introduce a cut-off function 
$\chi\in C^\infty(\R^{n+1})$ such that  
$0 \leq \chi \leq 1$ in $\R^{n+1}$,  
$\supp \chi \subset \{(x,t)\in R^{n+1}\mid \psi(x,t)>\mu_1\}$ and 
$\chi \equiv 1$ in $\{(x,t)\in R^{n+1}\mid \psi(x,t)>\mu_2\}$. 

Set $y=\chi\pp_t u$, $z=\chi \pp_t^2 u$. 
By \eqref{eq:pr01}, we have
\begin{align}
\label{eq:pr06}
&\rho_2^2\pp_t y (x,t)
-\rho_1^2\pp_t^2 y(x,t) +2\rho_1\pp_t L y(x,t) -L^2 y(x,t) \\
&=\chi \pp_t F(x,t) + h_1(x,t), \quad (x,t)\in Q, \nonumber\\
\label{eq:pr07}
&\rho_2^2\pp_t z (x,t)
-\rho_1^2\pp_t^2 z(x,t) +2\rho_1\pp_t L z(x,t) -L^2 z(x,t)  \\
&=\chi \pp_t^2 F(x,t) + h_2(x,t), \quad (x,t)\in Q, \nonumber
\end{align}
where $h_1$ and $h_2$ are linear combinations of 
$\pp_x^\alpha \pp_t u$ ($|\alpha|\leq 3$), $\pp_x^\alpha \pp_t^2 u$ ($|\alpha|\leq 1$)
and 
$\pp_x^\alpha \pp_t^2 u$ ($|\alpha|\leq 3$), $\pp_x^\alpha \pp_t^3 u$ ($|\alpha|\leq 1$),
respectively, 
with coefficients containing 
$\pp_x^\alpha \chi$ ($1\leq |\alpha| \leq 4$), 
$\pp_x^\alpha \pp_t \chi$ ($|\alpha| \leq 2$). 

Fixing $\la>0$ and applying Theorem \ref{thm:ce0} to \eqref{eq:pr06} and \eqref{eq:pr07} in $Q_1$, we have
\begin{align}
\label{eq:pr08}
&
\int_{Q_1}
\Biggl[
s^2 \left( |\pp_t y|^2 +|\pp_t z|^2 \right)  
+s^2 \left( \sum_{i,j=1}^n|\pp_i \pp_j y|^2
+\sum_{i,j=1}^n|\pp_i \pp_j z|^2
\right) 
\\
&\qquad \quad
+s^4 \left(|\nabla y|^2 +|\nabla z|^2\right) 
+s^6  \left(|y|^2 +|z|^2 \right)
\Biggr]
e^{2s\va}\,dxdt 
\nonumber\\
&
\leq 
C\int_{Q_1} \chi^2 \left( | \pp_t F |^2 + | \pp_t^2 F |^2 \right) e^{2s\va}\,dxdt 
\nonumber\\
&\quad
+
C\int_{Q_1}  \left( | h_1 |^2 + | h_2 |^2 \right) e^{2s\va}\,dxdt. 
\nonumber
\end{align}
Here we note that the boundary term on $\pp Q_1$ of the Carleman estimate vanishes. 
Indeed, by the choice of $\chi$, the boundary term on $\pp Q_1\setminus \Gamma$ becomes $0$. 
Since $f=\pp_\nu f=0$ on $\gamma$, \eqref{eq:eq03} and $\pp_x^{\alpha}u=0$ on $\Gamma$, $|\alpha|= 2$, 
moreover, 
the boundary term on $\pp Q_1 \cap \Gamma$ vanishes. 
 
Noting that 
$
h_1(x,t)=h_2 (x,t)=0$, $(x,t)\in Q_2
$, 
\begin{equation}
\label{eq:ineq_weight}
1 \leq e^{2s\va(x,t)} \leq e^{2s \exp(\lambda \mu_2)},
\quad (x,t)\in Q_1 \setminus Q_2, 
\end{equation}
and that $h_1,h_2 \in L^2 (Q_1)$ have an upper bound depending on $M$ by \eqref{eq:M},
we obtain 
\begin{align}
\label{eq:pr09}
\int_{Q_1}  \left( | h_1 |^2 + | h_2 |^2 \right) e^{2s\va}\,dxdt 
&
=
\int_{Q_1\setminus Q_2}  \left( | h_1 |^2 + | h_2 |^2 \right) e^{2s\va}\,dxdt
\\
&\leq 
Ce^{2s \exp(\lambda \mu_2)}. 
\nonumber
\end{align}
By \eqref{eq:R} and \eqref{eq:pr02}, we get
\begin{align}
\label{eq:pr10}
&
\int_{Q_1} \chi^2 \left( | \pp_t F |^2 + | \pp_t^2 F |^2 \right) e^{2s\va}\,dxdt 
\\
&
\leq 
C\int_{Q_2}  \sum_{|\alpha|\leq 2} | \pp_x^\alpha f |^2 e^{2s\va}\,dxdt
+
C\int_{Q_1\setminus Q_2}  \sum_{|\alpha|\leq 2} | \pp_x^\alpha f |^2 e^{2s\va}\,dxdt \nonumber \\
&
\leq 
C\int_{Q_2}  \sum_{|\alpha|\leq 2} | \pp_x^\alpha f |^2 e^{2s\va}\,dxdt
+
Ce^{2s \exp(\lambda \mu_2)}. 
\nonumber
\end{align}
In the last in equality of \eqref{eq:pr10}, we used the following inequality
\begin{equation*}
\int_{Q_1\setminus Q_2}  \sum_{|\alpha|\leq 2} | \pp_x^\alpha f |^2 e^{2s\va}\,dxdt
\leq Ce^{2s \exp(\lambda \mu_2)}. 
\end{equation*}
which is obtained by \eqref{eq:M} and \eqref{eq:ineq_weight}. 
Combining \eqref{eq:pr08} with \eqref{eq:pr09} and \eqref{eq:pr10}, we have
\begin{align}
&
\int_{Q_1}
\Biggl[
s^2\left( |\pp_t y|^2 +|\pp_t z|^2 \right)  
+s^2 \left( \sum_{i,j=1}^n|\pp_i \pp_j y|^2
+\sum_{i,j=1}^n|\pp_i \pp_j z|^2
\right) \nonumber   \\
&\qquad \quad
+s^4  \left(|\nabla y|^2 +|\nabla z|^2\right) 
+s^6  \left(|y|^2 +|z|^2 \right)
\Biggr]
e^{2s\va}\,dxdt 
\nonumber \\
&
\leq 
C\int_{Q_2}  \sum_{|\alpha|\leq 2} | \pp_x^\alpha f |^2 e^{2s\va}\,dxdt
+
Ce^{2s \exp(\lambda \mu_2)}. 
\nonumber 
\end{align}
Since $y=\pp_t u$, $z=\pp_t^2 u$ in $Q_2$ and $Q_2 \subset Q_1$, we obtain
\begin{align}
\label{eq:pr11}
&
\int_{Q_2}
\Biggl[
s^2  \left( |\pp_t^2 u|^2 +|\pp_t^3 u|^2 \right)  
+s^2 \left( \sum_{i,j=1}^n|\pp_i \pp_j \pp_t u|^2
+\sum_{i,j=1}^n|\pp_i \pp_j \pp_t^2 u|^2
\right)  
\\
&\quad \quad
+s^4  \left(|\nabla \pp_t u|^2 +|\nabla \pp_t^2 u|^2\right) 
+s^6  \left(|\pp_t u|^2 +|\pp_t^2 u|^2 \right)
\Biggr]
e^{2s\va}\,dxdt 
\nonumber\\
&
\leq 
C\int_{Q_2}  \sum_{|\alpha|\leq 2} | \pp_x^\alpha f |^2 e^{2s\va}\,dxdt
+
Ce^{2s \exp(\lambda \mu_2)}. 
\nonumber
\end{align}
Since 
$\chi =0$ in $\Omega \times (0,t_0) \setminus Q_1^-$,
we have
\begin{align}
\label{eq:pr12}
&\int_{\Omega_2}
|\pp_t u(x,t_0)|^2 e^{2s\va(x,t_0)}
\,dx\\
&\leq 
\int_{\Omega_1}
|\chi(x,t_0) \pp_t u(x,t_0)|^2 e^{2s\va(x,t_0)}
\,dx \nonumber \\
&=
\int_0^{t_0}
\int_{\Omega_1} 
\pp_t \left( |\chi \pp_t u|^2 e^{2s\va}\right)
\,dxdt \nonumber \\
&\leq
C
\int_0^{t_0}
\int_{\Omega_1} 
\left[ \chi^2|\pp_t u||\pp_t^2 u| + \left(\chi |\pp_t \chi| +s \chi^2\right)|\pp_t u|^2 \right]e^{2s\va}
\,dxdt \nonumber \\
&=
C
\int_{Q_1^-} 
\left[ \chi^2|\pp_t u||\pp_t^2 u| + \left(\chi |\pp_t \chi| +s \chi^2\right)|\pp_t u|^2 \right]e^{2s\va}
\,dxdt \nonumber \\
&\leq
C
\int_{Q_1} 
s \left( |\pp_t u|^2 + |\pp_t^2 u|^2\right) e^{2s\va}
\,dxdt \nonumber \\
&\leq
C
\int_{Q_2} 
s \left( |\pp_t u|^2 + |\pp_t^2 u|^2\right) e^{2s\va}
\,dxdt 
+C e^{2s\exp(\la \mu_2)}
\nonumber .
\end{align}
Together this with \eqref{eq:pr11}, we see that 
\begin{equation}
\label{eq:pr13}
\int_{\Omega_2}
|\pp_t u(x,t_0)|^2 e^{2s\va(x,t_0)}
\,dx
\leq 
\frac{C}{s^5}\int_{Q_2}  \sum_{|\alpha|\leq 2} | \pp_x^\alpha f |^2 e^{2s\va}\,dxdt
+
Ce^{2s \exp(\lambda \mu_2)}. 
\end{equation}
Similarly, we may obtain
\begin{align}
\label{eq:pr14}
&\int_{\Omega_2}
|\pp_t^2 u(x,t_0)|^2 e^{2s\va(x,t_0)}
\,dx\\
&\leq 
\frac{C}{s}\int_{Q_2}  \sum_{|\alpha|\leq 2} | \pp_x^\alpha f |^2 e^{2s\va}\,dxdt
+
Ce^{2s \exp(\lambda \mu_2)},
\nonumber \\
\label{eq:pr15}
&\int_{\Omega_2}
|\pp_t L u(x,t_0)|^2 e^{2s\va(x,t_0)}
\,dx\\
&\leq 
\frac{C}{s}\int_{Q_2}  \sum_{|\alpha|\leq 2} | \pp_x^\alpha f |^2 e^{2s\va}\,dxdt
+
Ce^{2s \exp(\lambda \mu_2)}.
\nonumber
\end{align}
By \eqref{eq:pr05}, \eqref{eq:pr13}--\eqref{eq:pr15}, we have
\begin{align}
\label{eq:pr16}
&\int_{\Omega_2} |F(x,t_0)|^2e^{2s\va(x,t_0)}\,dx \\
&
\leq 
\frac{C}{s}\int_{Q_2}  \sum_{|\alpha|\leq 2} | \pp_x^\alpha f |^2 e^{2s\va}\,dxdt
\nonumber \\
&\quad
+
C
\int_{\Omega_2} \sum_{|\alpha|\leq 4} |\pp_x^\alpha u(x,t_0)|^2e^{2s\va(x,t_0)}\,dx 
+
Ce^{2s \exp(\lambda \mu_2)}.
\nonumber 
\end{align}

Next we estimate 
\begin{equation*}
\int_{\Omega_2} |F(x,t_0)|^2e^{2s\va(x,t_0)}\,dx 
\end{equation*}
from below. 
To apply the Carleman estimate for elliptic equation, 
we introduce a cut-off function 
$\wchi\in C^\infty(\R^n)$ such that  
$0 \leq \wchi \leq 1$ in $\R^n$, 
$\supp \wchi \subset \{x\in R^{n}\mid d(x)>\mu_1\}$ and 
$\wchi\equiv 1$ in $\{x\in R^{n}\mid d(x)>\mu_2\}$. 

Set $\wf=\wchi f$. By \eqref{eq:pr02} at $t=t_0$, we have
\begin{align}
\label{eq:pr17}
&
\sum_{i,j=1}^n \pp_i(a_{ij}(x)\pp_j \wf(x)) \\
&\quad
+\frac{1}{R(x,t_0)}\sum_{j=1}^n \left( 2 \sum_{i=1}^n a_{ij}(x)\pp_i R(x,t_0) + b_j(x) R(x,t_0) \right) \pp_j \wf(x) \nonumber\\
&\quad
+\frac{1}{R(x,t_0)}\Biggl[
\rho_2\pp_t^{\frac12}R(x,t_0)-\rho_1\pp_t R(x,t_0) 
+\sum_{i,j=1}^n \pp_i(a_{ij}(x)\pp_j R(x,t_0)) \nonumber \\
&\qquad\qquad\qquad
+\sum_{j=1}^n b_j(x) \pp_j R(x,t_0)+c(x) R(x,t_0)
+
\frac{\rho_2 R(x,0)}{\sqrt{\pi t_0}}
\Biggr]\wf(x) \nonumber \\
&=
\frac{\wchi (x) F(x,t_0)}{R(x,t_0)} + h_3(x), \quad x\in \Omega, 
\nonumber
\end{align}
where 
\begin{align*}
h_3(x)
&=
2\sum_{i,j=1}^n a_{ij} \pp_i \wchi(x)\pp_j f(x)\\
&\quad
+\Biggl[
\sum_{i,j=1}^n \pp_i (a_{ij}\pp_j \wchi(x)) \\
&\qquad \quad
+\frac{1}{R(x,t_0)}\sum_{j=1}^n \left( 2 \sum_{i=1}^n a_{ij}(x)\pp_i R(x,t_0) + b_j(x) R(x,t_0) \right) (\pp_j \wchi)
\Biggr]f(x),\quad x\in \Omega.
\end{align*}
Here we see that $h_3(x)=0$, $x\in \Omega_2$ and $\|h_3\|_{L^2(\Omega)}\leq C$ by \eqref{eq:M}. 
Applying the Lemma \ref{lem:celeme} to \eqref{eq:pr17} in $\Omega_1$ 
and noting that $\wf(x)=f(x)$, $x\in \Omega_2$,  \eqref{eq:R}, \eqref{eq:M} and \eqref{eq:ineq_weight}, we obtain 
\begin{align}
\label{eq:pr18}
&
\frac1{s}
\int_{\Omega_2}
 \sum_{|\alpha|\leq 2} | \pp_x^\alpha f(x) |^2 e^{2s\va(x,t_0)}
\,dx \\
&\leq
C
\int_{\Omega_2}
\left(
\frac1{s}  
\sum_{i,j=1}^n|\pp_i \pp_j f|^2
+s |\nabla f|^2 
+s^3 |f|^2
\right)
e^{2s\va(x,t_0)}\,dx  \nonumber \\
&\leq
C
\int_{\Omega_1}
\left(
\frac1{s}  
\sum_{i,j=1}^n|\pp_i \pp_j \wf|^2
+s |\nabla \wf|^2 
+s^3 |\wf|^2
\right)
e^{2s\va(x,t_0)}\,dx 
\nonumber \\
&\leq 
C
\int_{\Omega_1}
\left|
\frac{\wchi (x) F(x,t_0)}{R(x,t_0)}
\right|^2
e^{2s\va(x,t_0)}\,dx 
+C
\int_{\Omega_1\setminus \Omega_2}
\left|
h_3(x)
\right|^2
e^{2s\va(x,t_0)}\,dx 
\nonumber \\
&\leq
C
\int_{\Omega_2}
\left|
F(x,t_0)
\right|^2
e^{2s\va(x,t_0)}\,dx 
+
C
\int_{\Omega_1\setminus \Omega_2}
\left|
F(x,t_0)
\right|^2
e^{2s\va(x,t_0)}\,dx \nonumber\\
&\quad
+Ce^{2s \exp(\lambda \mu_2)} \nonumber \\
&\leq
C
\int_{\Omega_2}
\left|
F(x,t_0)
\right|^2
e^{2s\va(x,t_0)}\,dx
+Ce^{2s \exp(\lambda \mu_2)}. \nonumber  
\end{align}
Here the boundary term on $\pp \Omega_1$ of the Carleman estimate vanishes. 
Indeed, by $f=\pp_\nu f=0$ on $\gamma$ and the choice of $\wchi$, we see that $\wf=\pp_\nu \wf=0$ on $\pp \Omega_1$.

By \eqref{eq:pr16} and \eqref{eq:pr18}, we obtain
\begin{align}
\label{eq:pr19}
&\frac1{s}\int_{\Omega_2}  \sum_{|\alpha|\leq 2} | \pp_x^\alpha f(x) |^2 e^{2s\va(x,t_0)}\,dx \\
&
\leq 
\frac{C}{s}\int_{Q_2}  \sum_{|\alpha|\leq 2} | \pp_x^\alpha f |^2 e^{2s\va}\,dxdt
\nonumber \\
&\quad
+
C
\int_{\Omega_2} \sum_{|\alpha|\leq 4} |\pp_x^\alpha u(x,t_0)|^2e^{2s\va(x,t_0)}\,dx 
+
Ce^{2s \exp(\lambda \mu_2)}.
\nonumber 
\end{align}
Let us estimate the first integral term on the right-hand side of \eqref{eq:pr19}. 
\begin{align*}
\int_{Q_2}  \sum_{|\alpha|\leq 2} | \pp_x^\alpha f |^2 e^{2s\va}\,dxdt
\leq 
\int_{\Omega_2}  \sum_{|\alpha|\leq 2} | \pp_x^\alpha f |^2 e^{2s\va}e^{2s\va(x,t_0)} \left( \int_0^T e^{-2s(\va(x,t_0)-\va(x,t))}\,dt \right)\,dx
\end{align*}
Noting that $\va(x,t_0)-\va(x,t)=e^{\la d(x)} (1-e^{-\beta(t-t_0)^2}) \geq 0$, $(x,t)\in Q$, 
by Lebesgue's dominated convergence theorem, 
$
\int_0^T e^{-2s(\va(x,t_0)-\va(x,t))}\,dt 
$
converges pointwise to $0$ in $\Omega$ as $s$ tends to $\infty$. 
By Dini's theorem, moreover, we may see that 
$
\int_0^T e^{-2s(\va(x,t_0)-\va(x,t))}\,dt 
$
converges uniformly to $0$ in $\Omega_2$ as $s$ goes to $\infty$.
Hence 
taking sufficient large $s>0$, we may absorb the first term on the right-hand side of \eqref{eq:pr19} 
into the left-hand side, that is, there exists $s_1>0$ such that 
\begin{align}
\label{eq:pr20}
&\int_{\Omega_2}  \sum_{|\alpha|\leq 2} | \pp_x^\alpha f(x) |^2 e^{2s\va(x,t_0)}\,dx\\
&
\leq 
C
\int_{\Omega_2} \sum_{|\alpha|\leq 4} |\pp_x^\alpha u(x,t_0)|^2e^{2s\va(x,t_0)}\,dx 
+
Ce^{2s \exp(\lambda \mu_2)}
\nonumber 
\end{align}
for all $s>s_1$. 

By $e^{2s\va(x,t_0)}\geq e^{2s\exp(\la\mu_3)}$, $x\in \Omega_3$, we estimate the left-hand side of \eqref{eq:pr20} from below. 
\begin{align*}
\int_{\Omega_2}  \sum_{|\alpha|\leq 2} | \pp_x^\alpha f(x) |^2 e^{2s\va(x,t_0)}\,dx
&\geq 
\int_{\Omega_3}  \sum_{|\alpha|\leq 2} | \pp_x^\alpha f(x) |^2 e^{2s\va(x,t_0)}\,dx\\
&\geq 
e^{2s\exp(\la\mu_3)}  \int_{\Omega_3}\sum_{|\alpha|\leq 2} | \pp_x^\alpha f(x) |^2 \,dx. 
\end{align*}
Combining this with \eqref{eq:pr20}, we have
\begin{align}
\label{eq:pr21}
&e^{2s\exp(\la\mu_3)}  \int_{\Omega_3}\sum_{|\alpha|\leq 2} | \pp_x^\alpha f(x) |^2 \,dx\\
&
\leq 
C
\int_{\Omega_2} \sum_{|\alpha|\leq 4} |\pp_x^\alpha u(x,t_0)|^2e^{2s\va(x,t_0)}\,dx 
+
Ce^{2s \exp(\lambda \mu_2)}
\nonumber 
\end{align}
for all $s>s_1$. 
Dividing the both side of \eqref{eq:pr21} by $e^{2s\exp(\la\mu_3)} $, we get
\begin{align}
\label{eq:pr22}
&\int_{\Omega_3}\sum_{|\alpha|\leq 2} | \pp_x^\alpha f(x) |^2 \,dx\\
&
\leq 
C
\int_{\Omega_2} \sum_{|\alpha|\leq 4} |\pp_x^\alpha u(x,t_0)|^2 e^{2s[\va (x,t_0)-\exp(\lambda \mu_3)]}\,dx 
+
Ce^{2s [\exp(\lambda \mu_2)-\exp(\lambda \mu_3)]}
\nonumber 
\end{align}
for all $s>s_1$. 
Noting that 
$\va (x,t_0)-\exp(\lambda \mu_3)>0$, $x\in \Omega_2$ and 
$\exp(\lambda \mu_2)-\exp(\lambda \mu_3)<0$, 
there exist $C_1,C_2,D_1,D_2>0$ such that 
\begin{equation*}
\left\|
f
\right\|_{H^2(\Omega_3)}^2
\leq 
C_1
\left\|
u(\cdot,t_0)
\right\|_{H^4(\Omega_2)}^2 e^{D_1 s}
+
C_2
e^{-D_2 s}
\end{equation*}
for all $s>s_1$. 
Taking $C_3>0$ such that $C_1\leq C_3e^{-D_1s_1}$, $C_2 \leq C_3e^{D_2 s_1}$ 
and setting $\sigma =s-s_1$, we have 
\begin{equation}
\label{eq:pr23}
\left\|
f
\right\|_{H^2(\Omega_3)}^2
\leq 
C_3\left(
\left\|
u(\cdot,t_0)
\right\|_{H^4(\Omega_2)}^2 e^{D_1 \sigma}
+
e^{-D_2 \sigma}
\right)
\end{equation}
for all $\sigma>0$. 

If 
$
\left\| u(\cdot,t_0) \right\|_{H^4(\Omega_2)} 
\geq 1
$, by 
$\| f \|_{H^2(\Omega)}
\leq M$, we may obtain the stability estimate immediately. Moreover, provided that 
$\left\|
u(\cdot,t_0)
\right\|_{H^4(\Omega_2)} =0
$, we have 
\begin{equation}
\label{eq:pr24}
\left\|
f
\right\|_{H^2(\Omega_3)}^2
\leq 
C_3
e^{-D_2 \sigma}
\end{equation}
by \eqref{eq:pr23}. As $\sigma$ goes to $\infty$, the right hand side of \eqref{eq:pr24} tends to $0$. 
Hence we get $\left\|
f
\right\|_{H^2(\Omega_3)}=0$. 
So, it is sufficient to assume that 
$0<
\left\|
u(\cdot,t_0)
\right\|_{H^4(\Omega_2)} 
< 1
$. 

Taking 
\begin{equation*}
\sigma = -\frac{\log \left( \left\| u(\cdot,t_0) \right\|_{H^4(\Omega_2)}^2  \right)}{D_1+D_2}>0,
\end{equation*}
which minimize the right-hand side of \eqref{eq:pr23}, we see that 
\begin{equation*}
\left\|
u(\cdot,t_0)
\right\|_{H^4(\Omega_2)}^2 e^{D_1 \sigma}
+
e^{-D_2 \sigma}
=2 \left\|
u(\cdot,t_0)
\right\|_{H^4(\Omega_2)}^{2\kappa}
\end{equation*}
with $\kappa = \frac{D_2}{D_1+D_2} \in (0,1)$. 
Together this with \eqref{eq:pr23}, we have
\begin{equation*}
\left\|
f
\right\|_{H^2(\Omega_3)}
\leq 
C
\left\|
u(\cdot,t_0)
\right\|_{H^4(\Omega_2)}^{\kappa}. 
\end{equation*}
Since $\omega \subset \Omega_3$ and $\Omega_2 \subset \Omega$, 
we get \eqref{eq:se}. 
Thus we complete the proof. \qed

%
%
%
%

\end{document}